\documentclass[11pt]{article}
\usepackage[T1]{fontenc}
\usepackage[utf8]{inputenc}
\usepackage{lmodern}
\usepackage{amsmath,amssymb,amsthm}
\usepackage[margin=30mm]{geometry}
\usepackage[hidelinks]{hyperref}
\usepackage{authblk}

\newtheorem{theorem}{Theorem}[section]
\newtheorem{lemma}[theorem]{Lemma}
\newtheorem{proposition}[theorem]{Proposition}
\newtheorem{corollary}[theorem]{Corollary}
\numberwithin{equation}{section}

\title{\textbf{A smooth non-compactly generated category}}
\author[1]{A.S. Frolov}
\affil[1]{Higher School of Modern Mathematics, MIPT}
\affil[]{\textit{frolov.aleksandr@phystech.edu}}

\date{\today}

\begin{document}
\maketitle
\begin{abstract}
Let $A=\mathbb{Q}[t^{\frac{1}{2^n}}:n\geq 0]$ and consider
$A[u,u^{-1};\sigma]$, a skew Laurent polynomial ring, where $\sigma(t^q)=t^{2q}$.
We study the kernel of derived extension of scalars along
\[
 A[u,u^{-1};\sigma]\longrightarrow\mathbb{Q}[u,u^{-1}],
\]
the quotient by $\mathfrak{m}A[u,u^{-1};\sigma]$, with
$\mathfrak{m}=(t^{\frac{1}{2^n}}:n\geq 0)$.
A finite free resolution of
$\mathfrak{m}A[u,u^{-1};\sigma]$ over
\[
 A[u,u^{-1};\sigma]\otimes_{\mathbb{Q}}
 \bigl(A[u,u^{-1};\sigma]\bigr)^{\mathrm{op}}
\]
proves smoothness. A specialization inequality for matrix ranks supplies
a nonzero exact colimit-preserving functor that vanishes on every compact
object, proving failure of compact generation.
\end{abstract}

\section{Introduction}

Let
\[
 A=\mathbb{Q}[t^{\frac{1}{2^n}}:n\geq 0],\qquad
 \mathfrak{m}=(t^{\frac{1}{2^n}}:n\geq 0),
 \qquad \sigma(t^q)=t^{2q}.
\]
We consider the category $\mathcal{C}$
\[
 \mathcal{C}:= \ker\left(D\bigl(A[u,u^{-1};\sigma]\bigr)
 \xrightarrow{-\otimes_{A[u,u^{-1};\sigma]}^{\mathbf{L}}
                    \mathbb{Q}[u,u^{-1}]}
 D\bigl(\mathbb{Q}[u,u^{-1}]\bigr)\right).
\]
Here $A[u,u^{-1};\sigma]$ is a skew Laurent polynomial ring. Throughout the paper, ``$\mathcal{C}$'' refers to this kernel.
The quotient used here is
\[
 \frac{A[u,u^{-1};\sigma]}{\mathfrak{m}A[u,u^{-1};\sigma]}
 \cong\mathbb{Q}[u,u^{-1}].
\]
The algebra $A[u,u^{-1};\sigma]$ consists of finite sums $\sum_i a_i u^i$,
with multiplication
\[
 (au^i)(bu^j)=a\sigma^i(b)u^{i+j}.
\]

\begin{theorem}\label{thm:main}
The category $\mathcal{C}$ is dualizable and smooth, and it is not compactly generated.
\end{theorem}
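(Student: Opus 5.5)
Write $R=A[u,u^{-1};\sigma]$, $I=\mathfrak{m}R$ and $S=R/I\cong\mathbb{Q}[u,u^{-1}]$. The plan has three parts: identify $\mathcal{C}$ as a category of modules over an idempotent algebra, deduce dualizability and smoothness from a finite free resolution of $I$ over $R^e=R\otimes_{\mathbb{Q}}R^{\mathrm{op}}$, and then build an exact colimit-preserving functor on $\mathcal{C}$ that kills all compact objects but is nonzero.

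\textbf{Step 1: $\mathcal{C}$ as a module category.} Since $\sigma(\mathfrak{m})=\mathfrak{m}$, the ideal $I$ is two-sided. Because $\mathfrak{m}^2=\mathfrak{m}$ and $\mathfrak{m}$ is flat over $A$ (it is a filtered colimit of the free modules $t^{1/2^n}A$), we get $I\otimes^{\mathbf L}_R I\simeq I$ and $\operatorname{Tor}^R_{>0}(S,S)=0$. So $R\to S$ is a homological epimorphism. Then $\mathcal{C}$ is the kernel of a localization $D(R)\to D(S)$ that has both adjoints. It is therefore a colocalization, equivalent to $D(R)$-modules supported on $I$, namely $M\mapsto I\otimes^{\mathbf L}_R M$. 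Consequently $\mathcal{C}$ is a retract in $\mathrm{Pr}^L_{\mathrm{st}}$ of the compactly generated category $D(R)$, via inclusion and right adjoint, both colimit-preserving. Hence $\mathcal{C}$ is dualizable. Concretely, its dual is the analogous kernel for $R^{\mathrm{op}}$, the coevaluation is the object $I\in D(R^e)$, and the evaluation is $(M,N)\mapsto N\otimes^{\mathbf L}_R M$.

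\textbf{Step 2: smoothness.} Smoothness means the coevaluation object $I$ is compact in $\mathcal{C}\otimes\mathcal{C}^\vee\subset D(R^e)$. It suffices that $I$ is a perfect $R^e$-module. I would write $I$ explicitly. As a left $A$-module, $\mathfrak{m}$ is generated by $t^{1/2^n}$ with the relations $t^{1/2^{n+1}}\cdot t^{1/2^{n+1}}=t^{1/2^n}$. The point of $\sigma$ is that conjugation by $u$ maps $t^{1/2^{n+1}}$ to $t^{1/2^n}$. So over $R$, with the bimodule structure, the infinitely many generators collapse to finitely many, for example $x=t\otimes 1$ and a single relation of the form $t^{1/2}\cdot u\,x\,u^{-1}$-type identities. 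I would write a two- or three-term complex of free $R^e$-modules $R^e\leftarrow R^e\leftarrow R^e$, with maps given by elements like $u\otimes u^{-1}-t^{1/2}\otimes 1$, and check exactness by filtering by $u$-degree, reducing to the flat $A$-module computation. This explicit verification is likely computational but routine.

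\textbf{Step 3: failure of compact generation.} This is the main obstacle. Compact objects of $\mathcal{C}$ are compact in $D(R)$ that are killed by $S$, i.e.\ perfect $R$-complexes $P$ with $S\otimes^{\mathbf L}_R P=0$. I would define $F:\mathcal{C}\to D(K)$ for a suitable field $K$ as base change along a ring map $\phi:R\to K$ that does not factor through $S$. One candidate is to specialize $t^{q}\mapsto \lambda^q$, but $\sigma$-compatibility forces something like a Laurent series field with $u$ acting. Since $F$ is a base change, it is exact and colimit-preserving. It is nonzero on $I$ provided $\phi(\mathfrak{m})\neq0$.

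To see that $F$ kills compacts, I would represent $P$ by a finite complex of finite free modules with matrices over $R$. Acyclicity after reducing mod $\mathfrak{m}$ is a rank condition on matrices over $\mathbb{Q}[u^{\pm}]$. The needed specialization inequality says that the ranks of these matrices over $K$ are at least their ranks mod $\mathfrak{m}$, so acyclicity at $S$ propagates to acyclicity over $K$. I expect the proof of that inequality, which is a semicontinuity argument that must survive the noncommutativity, to be the core difficulty. I would attempt it by viewing $K$ as "closer to generic" than $S$, so that $S$ is a degeneration of $K$ through a valuation-type map that sends $t^{q}$ to $0$.

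Given that inequality, if $\mathcal{C}$ were compactly generated, then $F$ vanishing on generators would force $F=0$, contradicting $F(I)\neq0$. This proves the last assertion of Theorem~\ref{thm:main}.
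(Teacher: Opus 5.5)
Your Step~1, your reduction of smoothness to perfectness of $I$ over $R^e$, and the overall shape of Step~3 all match the paper. The gaps are in the two places where the real content lies, and your concrete guess in Step~2 is wrong.

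\textbf{Step 2: the resolution.} The shape you propose cannot work.
\begin{itemize}
\item The generator $t\in I$ is killed by two independent elements. The first is $t\otimes1-1\otimes t^{\mathrm{op}}$, since left and right multiplication by $t$ agree on $t$. The second is $u\otimes(u^{-1})^{\mathrm{op}}-t\otimes1$, since $utu^{-1}=t^2$. Your element $u\otimes u^{-1}-t^{1/2}\otimes1$ does not even kill $t$.
\item A resolution of $I$ of the form $R^e\leftarrow R^e\leftarrow R^e$ is impossible, because $R^e$ is a domain. A vanishing composite $h\mapsto hgf$ with $g\neq0$ forces $f=0$, hence $I\cong R^e$. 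But $I$ has torsion, since $(t\otimes1-1\otimes t^{\mathrm{op}})\cdot t=0$.
\item A length-one resolution is also impossible, since $\operatorname{pd}_{R^e}I=2$. To see $\operatorname{Ext}^2_{R^e}(I,R^e)\neq0$, take the one-dimensional bimodule $\mathbb{Q}$ on which every $t^q$ acts by $1$ on both sides and $u$ acts by $1$ on the left and by $2$ on the right.
\end{itemize}
The correct resolution has ranks $1,2,1$, with syzygy $\bigl(u\otimes(u^{-1})^{\mathrm{op}}-t^2\otimes1-t\otimes t^{\mathrm{op}},\,-(t^2\otimes1-1\otimes(t^2)^{\mathrm{op}})\bigr)$. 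Proving its exactness is the real work, and you leave it undone. The paper does it in two stages:
\begin{itemize}
\item It first works over the finitely generated model $\mathbb{Q}[t,s][v;\tau]$ acting on $t\mathbb{Q}[t]$, where a leading-coefficient induction in $v$-degree gives exactness.
\item It then induces along the flat extensions $\mathbb{Q}[t,s][v;\tau]\subset\bigcup_n v^{-n}\mathbb{Q}[t,s][v;\tau]=(A\otimes_{\mathbb{Q}}A)[v^{\pm1};\sigma\otimes\sigma]\subset R^e$, and checks that the induced module is $I$.
\end{itemize}

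\textbf{Step 3: the target and the inequality.} What is missing is the choice of target and the proof of the rank inequality.
\begin{itemize}
\item A commutative field cannot serve as the target. Any ring map $R\to k$ to a commutative field sends $t$ to an idempotent. So it either kills $\mathfrak{m}$, and then factors through $S$, or it sends every $t^q$ to $1$.
\item The second option does not kill compacts. The object $\operatorname{cone}\bigl(R\xrightarrow{(t-1)\cdot}R\bigr)$ is perfect and lies in $\mathcal{C}$, because $t-1\equiv-1$ mod $\mathfrak{m}$. Yet it survives at $t=1$.
\end{itemize}
The paper uses the generic division ring $D=\operatorname{Frac}(K[u^{\pm1};\sigma])$ with $K=\operatorname{Frac}(A)$, and must prove three things:
\begin{itemize}
\item $K[u^{\pm1};\sigma]$ is Ore, which follows from a dimension count.
\item $D$ is flat over $R$.
\item $\operatorname{rank}_DQ\ge\operatorname{rank}_{\mathbb{Q}(u)}\overline{Q}$.
\end{itemize}
The key to the inequality is that the $t$-adic valuation satisfies $\nu\circ\sigma=2\nu$. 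Hence $A_{\mathfrak{m}}$ and its maximal ideal are $\sigma^{\pm1}$-stable, and reduction $A_{\mathfrak{m}}[u^{\pm1};\sigma]\to\mathbb{Q}[u^{\pm1}]$ is a ring map. Given a left relation over $D$ among lifted rows, one clears denominators into $K[u^{\pm1};\sigma]$ and normalizes by left multiplication by $t^{-q}$. The relation then reduces to a nonzero relation mod $\mathfrak{m}$, a contradiction. You defer exactly this as ``the core difficulty,'' so failure of compact generation is not established.

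A minor point: perfect complexes need not have free terms. You can pass to $P\oplus P[1]$, or use the paper's idempotent argument showing that $\dim_D$ and $\dim_{\mathbb{Q}(u)}$ agree on finitely generated projectives.
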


This example is a counterexample to the conjecture \cite[Conjecture 0.6]{ramzi}.
\vspace{5pt}

The proof has two main ingredients: a finite bimodule resolution of
$\mathfrak{m}A[u,u^{-1};\sigma]$ and a nonzero exact colimit-preserving
functor annihilating every compact object of the category $\mathcal{C}$.
Sections~2 and~4 establish dualizability and smoothness, while Section~5
proves failure of compact generation.
\vspace{5pt}

In the subsequent work \cite{geom}, we prove the following theorems \textbf{proposed by A.I. Efimov}:

\begin{theorem}
    Let $X$ be a separated scheme of finite type over finite field $\mathbb{F}_p$ and let $Y\subset X$ be a closed subscheme. Denote by $\varphi$ the endofunctor of $\mathrm{QCoh}(X)$ given by pullback along the Frobenius morphism, and use the same notation for the corresponding endofunctor of $\mathrm{QCoh}(Y)$. Suppose that $X\backslash Y\ne \varnothing$ is smooth. Then the category $$\ker\left(\mathrm{QCoh}(X)_{h\varphi}\to \mathrm{QCoh}(Y)_{h\varphi}\right)$$ is a smooth, dualizable stable category that is not compactly generated. 
\end{theorem}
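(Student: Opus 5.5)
The plan is to reduce the geometric statement to the same three ingredients as Theorem~\ref{thm:main}, with $\varphi$ playing the role of $\sigma$ and the perfection of $X$ along $Y$ playing the role of $A=\mathbb{Q}[t^{1/2^n}]$. First I will localize. The category $\mathrm{QCoh}(X)_{h\varphi}$ is the colimit in $\mathrm{Pr}^L$ of $\varphi$ acting on $\mathrm{QCoh}(X)$. It is compactly generated, by the images of $\mathrm{Perf}(X)$, and its morphisms are computed by twisted Laurent series in the style of $A[u,u^{-1};\sigma]$. The kernel $\mathcal{K}$ of the restriction to $Y$ is local on $X$, because Zariski descent commutes with $(-)_{h\varphi}$: $\varphi$ is the identity on the underlying space. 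So dualizability, smoothness and non-generation can all be checked after reducing to $X=\operatorname{Spec}R$ affine with $Y=V(I)$. For smoothness and dualizability the reduction uses finite Čech gluing of dualizable/smooth categories along open immersions.

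Next I would prove a \emph{perfection lemma}. In $\mathcal{K}$ the endofunctor $\varphi^*$ becomes invertible. Consequently $\mathcal{K}$ agrees with the analogous kernel built from $R_{\mathrm{perf}}=\operatorname{colim}_\varphi R$ and the ideal $I_{\mathrm{perf}}=\sqrt{I R_{\mathrm{perf}}}$. This ideal satisfies $I_{\mathrm{perf}}^2=I_{\mathrm{perf}}$, exactly as $\mathfrak{m}^2=\mathfrak{m}$ for $A$. Since the ideal is idempotent, $R_{\mathrm{perf}}\to R_{\mathrm{perf}}/I_{\mathrm{perf}}$ is a homological epimorphism (flatness of $I_{\mathrm{perf}}$ as a filtered colimit of principal-like pieces). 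This should make $\mathcal{K}$ the kernel of a strongly continuous localization, hence dualizable, by the argument of Section~2.

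For smoothness I will mimic the finite bimodule resolution from Section~4. Since $X\setminus Y$ is smooth of finite type, the diagonal of $R[1/f]$ (for generators $f$ of $I$) has a finite resolution by finitely generated projective bimodules. I then glue this with the Koszul-type resolution of $I_{\mathrm{perf}}$ over the twisted ring $R_{\mathrm{perf}}[u,u^{-1};\varphi]$. In the model case the Frobenius relation $t^{1/p^{n}}=\varphi^{-1}(t^{1/p^{n-1}})$ lets the infinitely many generators be produced from one generator and $u$, giving a finite resolution. This would show that the identity bimodule of $\mathcal{K}$ is compact in $\mathcal{K}\otimes\mathcal{K}^{\vee}$.

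Finally, for failure of compact generation I will construct a nonzero exact colimit-preserving functor $\mathcal{K}\to D(\mathbb{F}_p)$-modules that kills all compacts. Pick a closed point $y\in Y$ lying in the closure of $X\setminus Y$, which exists since $X\setminus Y\neq\varnothing$ and we may restrict to a component meeting $Y$. Take a curve through $y$ and reduce to the one-dimensional case $k[t]_{\mathrm{perf}}$. Compact objects of $\mathcal{K}$ are then represented by finite complexes of twisted matrices $M(u)$. The specialization inequality for matrix ranks says that the generic rank of $M$ is at least the rank of $M$ mod $I$, with equality forced by vanishing of the restriction to $Y$. Combined with Frobenius scaling of valuations, this should show that the functor "generic fiber of the $t$-adic completion modulo the twisted action" vanishes on compacts, but is nonzero on a non-compact object such as $\operatorname{colim}\varphi^{n*}$ of a skyscraper-type module.

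I expect the main obstacle to be this last step: making the rank inequality work uniformly for twisted matrices over a general $R_{\mathrm{perf}}$ rather than over $\mathbb{Q}[t^{1/2^n}]$. My first attempt will be to reduce to the curve case by base change, since base change preserves both compactness and vanishing on $Y$.
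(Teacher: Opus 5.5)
The paper does not prove this statement. It is only announced, with the proof deferred to later work, so I can only compare your plan with the model-case argument it is meant to generalize. Judged that way, the genuine gap is the smoothness step.

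Write $B=R_{\mathrm{perf}}[u,u^{-1};\varphi]$. The finite resolution \eqref{eq:resolution} exists because the \emph{un-perfected} object, namely $t\mathbb{Q}[t]$ on the diagonal with $v$ acting through $\tau$, already has a finite free resolution over $\mathbb{Q}[t,s][v;\tau]$. That uses two facts:
\begin{itemize}
\item $\mathbb{A}^1$ is smooth \emph{along} $Y$, so the diagonal is a regular immersion there;
\item $Y$ is principal.
\end{itemize}
Everything after that is flat base change.

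Your hypothesis only makes $X\setminus Y$ smooth. At a singular point of $X$ lying in $Y$, the analogous object $\Delta_*I$ has infinite Tor-dimension over $R\otimes R$. For example, for the node $R=\mathbb{F}_p[x,y]/(xy)$ with $Y$ the origin, $I=(x)\oplus(y)\cong R/(y)\oplus R/(x)$. So no ``Koszul-type resolution'' exists there. A resolution of the diagonal of $R[1/f]$ only controls $D(f)\times D(f)$, which is not where compactness of $I_{\mathrm{perf}}B$ over $B\otimes B^{\mathrm{op}}$ is at stake, and ``glue'' names no mechanism.

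One way to supply the missing step:
\begin{enumerate}
\item For $f\in I$, the complex $E=\Delta_*\mathcal{O}_X$ is pseudo-coherent and is perfect over $D(f\otimes 1)$; this uses separatedness.
\item An $\operatorname{Ext}$-torsion argument then gives a perfect complex $P$ and maps $E\to P\to E$ composing to $(f\otimes 1)^k$.
\item Absorb this factor into the Frobenius structure of $E$, choosing $(p-1)\mid k$. This gives $P$ a Frobenius structure whose Frobenius colimit is $\sqrt{fR_{\mathrm{perf}}}$ on the diagonal, hence a compact object.
\item In perfect rings, $\sqrt{\mathfrak a}\cap\sqrt{\mathfrak b}=\sqrt{\mathfrak a\mathfrak b}$ and $\sqrt{\mathfrak a+\mathfrak b}=\sqrt{\mathfrak a}+\sqrt{\mathfrak b}$. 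These give a Mayer--Vietoris induction on the number of generators of $I$.
\end{enumerate}
Also, smoothness does not glue naively along open immersions. Globally you must check the diagonal on all charts $U_i\times U_j$, and that is where separatedness enters.

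The remaining steps need repairs rather than new ideas.
\begin{itemize}
\item The ideal $I_{\mathrm{perf}}=\sqrt{IR_{\mathrm{perf}}}$ is not flat in general. For $R=\mathbb{F}_p[x,y]$ and $I=(x,y)$, one has $\operatorname{Tor}_2^{R_{\mathrm{perf}}}(\mathbb{F}_p,R_{\mathrm{perf}}/(x,y))\neq 0$. The homological-epimorphism property should instead come from Tor-independence of maps between perfect rings. That same fact shows that pullback to a curve carries $I_{\mathrm{perf}}B$ to the corresponding ideal on the curve.
\item Your test object for non-compact generation is wrong. A module built from a skyscraper at $y\in Y$ does not lie in the kernel, and any generic-fiber functor kills it. Take the image of $I_{\mathrm{perf}}B$ itself, as in \eqref{eq:nonzero}. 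Pass to the perfected local ring of a normalized curve through $y$, let $K$ be its fraction field, and use the functor $-\otimes\operatorname{Frac}(K[u,u^{-1};\varphi])$. No completion is needed, and $\mathbb{Q}(u)$ is replaced by the division ring of fractions of $k(y)[u,u^{-1};\varphi]$.
\item The point $y$ need not exist. If $Y$ is open and closed in $X$ (for instance $Y=\varnothing$), the kernel is $\mathrm{QCoh}(X\setminus Y)_{h\varphi}$, which is compactly generated. So the statement itself needs an extra hypothesis such as $Y\cap\overline{X\setminus Y}\neq\varnothing$.
\end{itemize}
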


\begin{theorem}
    There is a smooth proper stable category which provides a counterexample to the naive telescope conjecture. 
\end{theorem}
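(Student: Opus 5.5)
The plan is to realize a category of the same kind as $\mathcal{C}$ from Theorem~\ref{thm:main} as the kernel of a smashing localization inside $\mathrm{Ind}(\mathcal{B})$, where $\mathcal{B}$ is a small smooth proper stable category. Recall that the naive telescope conjecture for a compactly generated category $\mathcal{T}$ asserts that every smashing localization of $\mathcal{T}$ is generated by compact objects of $\mathcal{T}$. Hence it suffices to produce a smooth proper $\mathcal{B}$ and a smashing subcategory $\mathcal{K}\subset\mathrm{Ind}(\mathcal{B})$ that is not generated by objects of $\mathcal{B}$. The non-compact-generation will be detected exactly as in Section~5: I will exhibit a nonzero exact colimit-preserving functor on $\mathcal{K}$ that kills every object of $\mathcal{K}\cap\mathcal{B}$. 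The specialization inequality for matrix ranks should carry over once the relevant objects are identified.

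\textbf{Step 1 (reduction).} Smashing localizations of $\mathrm{Ind}(\mathcal{B})$ correspond to idempotent (co)algebras, that is, to maps $\mathbf{1}\to L$ in the monoidal category of endofunctors with $L\otimes L\simeq L$. They also correspond to short exact sequences of dualizable categories $\mathcal{K}\to\mathrm{Ind}(\mathcal{B})\to\mathcal{L}$ whose functors are strongly continuous. So I need a smooth proper $\mathcal{B}$ together with a strongly continuous localization $\mathrm{Ind}(\mathcal{B})\to\mathcal{L}$ whose kernel $\mathcal{K}$ is a non-compactly-generated category of the type of $\mathcal{C}$.

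\textbf{Step 2 (construction by gluing).} The ring $A[u,u^{-1};\sigma]$ is not proper, so it cannot serve directly. Instead I will build $\mathcal{B}$ as a semi-orthogonal gluing of finitely many copies of $\mathrm{Perf}(\mathbb{Q})$ (or of $\mathrm{Perf}$ of a finite-dimensional algebra of finite global dimension) along perfect gluing bimodules. Such gluings are automatically smooth and proper. The natural candidate is the finite free bimodule resolution from Section~4. Its terms and differentials are finite matrices over $A[u,u^{-1};\sigma]$, and they involve only finitely many generators $t^{1/2^n}$ modulo the action of $\sigma$. This finite combinatorial datum (the generators, $u$, and the relation $u t^{q}=t^{2q}u$) should be encoded as a finite quiver with relations, or as a finite-dimensional dg algebra $B$ of finite global dimension. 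One then chooses $\mathcal{L}$ so that inverting a suitable endomorphism or colimit recovers the non-proper ring $A[u,u^{-1};\sigma]$ and the quotient $\mathbb{Q}[u,u^{-1}]$. This is the standard mechanism: a telescope (colimit along a non-nilpotent map) produces a smashing localization whose kernel need not be compactly generated. Concretely, I expect $\mathcal{L}$ to be the localization at the telescope of the map $t^{1/2^{n+1}}\mapsto t^{1/2^{n}}$, with $u$ realizing the shift in $n$. Its kernel should then be identified with $\mathcal{C}$, or with a category admitting $\mathcal{C}$ as a strongly continuous retract. Smoothness of $\mathcal{K}$ then follows from smoothness of $\mathrm{Ind}(B)$ and of $\mathcal{L}$, the latter being established by the resolution of Section~4.

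\textbf{Step 3 (failure of compact generation).} I will transport the functor of Section~5 along the identification of Step~2. A retract of a compactly generated category, via strongly continuous functors, need not be compactly generated. Conversely, if $\mathcal{K}$ were generated by objects of $\mathcal{B}$, then the nonzero exact colimit-preserving functor that vanishes on compacts would vanish on all of $\mathcal{K}$, a contradiction.

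The main obstacle is Step~2: finding a \emph{finite} smooth proper model $B$ whose telescope localization reproduces the colimit $A=\operatorname{colim}_n\mathbb{Q}[t^{1/2^n}]$ together with the twist $\sigma$. The point is that properness forces all Hom spaces in $\mathcal{B}$ to be finite-dimensional, while the colimit must appear only after localization. My first attempt would be a two-vertex Kronecker-type quiver, with one arrow playing the role of $t$ and one the role of $u$. I would then check smoothness of the gluing and compute the idempotent algebra explicitly. If that fails, I would fall back on the geometric construction of the previous theorem with $X$ proper over $\mathbb{F}_p$, and verify properness of the relevant compact objects directly.
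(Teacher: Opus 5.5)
The paper does not prove this statement. It is only announced in the introduction as a result of subsequent work, so your proposal has to stand on its own, and as written it has a genuine gap. Step~2, which you yourself call the main obstacle, is the entire content of the theorem, and nothing in it is constructed: there is no $\mathcal{B}$, no smashing localization, and no identification of its kernel. The concrete mechanisms you name do not work:
\begin{itemize}
\item A two-vertex Kronecker-type quiver has no composable arrows, so the relation $ut=t^2u$ cannot even be imposed. Its path algebra is also hereditary (its derived category is that of $\mathbb{P}^1$), and for hereditary rings the telescope conjecture is a theorem of Krause and \v{S}\v{t}ov\'{\i}\v{c}ek.
\item ``Localizing at a telescope'', if it means inverting a map between compact objects of $\mathrm{Ind}(\mathcal{B})$, is a finite localization. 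Its kernel is compactly generated by construction, so it can never give a counterexample. In any other sense the construction is unspecified.
\item The Frobenius fallback has the same defect as $A[u,u^{-1};\sigma]$: $\mathrm{QCoh}(X)_{h\varphi}$ is not proper. Already for $X=\operatorname{Spec}\mathbb{F}_p$ it is $D(\mathbb{F}_p[u,u^{-1}])$.
\end{itemize}
Separately, smoothness of the kernel $\mathcal{K}$ is irrelevant, since only $\mathcal{B}$ has to be smooth and proper. It also does not follow from smoothness of $\mathrm{Ind}(\mathcal{B})$ and $\mathcal{L}$. For example, the kernel of $D(\mathbb{Q}[x])\to D(\mathbb{Q}[x^{\pm1}])$ is generated by $\mathbb{Q}[x]/x$, whose endomorphism algebra is proper but has infinite-dimensional Hochschild homology.

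The missing ingredient in your approach is a smooth categorical compactification of $\mathrm{Perf}(A[u,u^{-1};\sigma])$. That means a smooth proper $\mathcal{B}$ and a localization $p\colon\mathrm{Ind}(\mathcal{B})\to D(A[u,u^{-1};\sigma])$ whose right adjoint $p^R$ is fully faithful and colimit-preserving. Given that, the rest is short and needs no retracts:
\begin{itemize}
\item Let $q=-\otimes^{\mathbf{L}}_{A[u,u^{-1};\sigma]}\mathbb{Q}[u,u^{-1}]$. By the Lemma of Section~2, $q$ also has a fully faithful colimit-preserving right adjoint.
\item So $qp$ is a smashing localization of $\mathrm{Ind}(\mathcal{B})$, with kernel $\mathcal{K}=p^{-1}(\mathcal{C})$.
\item Suppose $\mathcal{K}$ were generated by objects of $\mathcal{B}$ lying in it. Since $p$ preserves compact objects, $p^R(\mathcal{C})\subset\mathcal{K}$ and $pp^R\simeq\mathrm{id}$, the category $\mathcal{C}$ would be generated by perfect complexes lying in $\mathcal{C}$.
\item The functor $G$ of Theorem~\ref{thm:not-compactly-generated} kills all such complexes, which contradicts $G(\mathfrak{m}A[u,u^{-1};\sigma])\neq 0$.
\end{itemize}

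Such compactifications are not automatic, so this is where the work lies. Your stated obstacle is partly illusory, though: no telescope is needed to produce $A$, since $t^{1/2^n}=u^{-n}tu^n$. Hence $A[u,u^{-1};\sigma]$ is generated by $t,u^{\pm1}$ subject to $ut=t^2u$. It is the Ore localization at $u$ of the tensor algebra over $\mathbb{Q}[t]$ of the twisted bimodule $\mathbb{Q}[t]u$, and inverting $u$ is a finite localization.

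A natural candidate for $\mathcal{B}$ is the Kronecker-type gluing of two copies of $\mathrm{Perf}(\mathbb{P}^1_{\mathbb{Q}})$ along $\mathcal{O}_\Delta\oplus\mathcal{O}_{\Gamma_F}$. Here $F\colon\mathbb{P}^1\to\mathbb{P}^1$ is the finite flat squaring map, which satisfies $F^{-1}(\infty)=\infty$. This gluing is smooth and proper. Three finite localizations should lead from it to $D(A[u,u^{-1};\sigma])$: inverting the $\mathcal{O}_\Delta$-component, restricting to $\mathbb{A}^1$, and inverting $u$. So your Kronecker instinct is reasonable, but it needs four exceptional objects rather than two. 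Verifying these localizations is exactly the argument that is missing.
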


\textbf{Statement on AI use.} ChatGPT 6 Astra was used to assist with the proof of the main
results and the preparation of an initial draft of the manuscript. The arguments and text have been reviewed, checked, and
substantially revised by the authors, who are responsible for the final manuscript.

\section*{Acknowledgement}

The author is grateful to Sasha Efimov for a useful discussion and a suggestion for further developing the work, which will appear in \cite{geom}.

\section{The ideal and the derived kernel}

\begin{lemma}
The ideal $\mathfrak{m}A[u,u^{-1};\sigma]$ is idempotent and flat on both
sides over $A[u,u^{-1};\sigma]$. Moreover,
\[
\begin{aligned}
 \mathbb{Q}[u,u^{-1}]\otimes_{A[u,u^{-1};\sigma]}^{\mathbf{L}}
       \mathbb{Q}[u,u^{-1}]&\simeq\mathbb{Q}[u,u^{-1}],\\
 \mathfrak{m}A[u,u^{-1};\sigma]\otimes_{A[u,u^{-1};\sigma]}^{\mathbf{L}}
       \mathbb{Q}[u,u^{-1}]&\simeq 0,\\
 \mathfrak{m}A[u,u^{-1};\sigma]\otimes_{A[u,u^{-1};\sigma]}^{\mathbf{L}}
       \mathfrak{m}A[u,u^{-1};\sigma]&\simeq
       \mathfrak{m}A[u,u^{-1};\sigma].
\end{aligned}
\]
\end{lemma}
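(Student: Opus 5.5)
Write $R=A[u,u^{-1};\sigma]$ and $I=\mathfrak{m}R$. The plan is to reduce everything to the commutative ring $A$, where $\mathfrak{m}$ is an idempotent flat ideal, and then transport along the free extension $A\to R$.

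\textbf{Step 1: $I$ is a two-sided ideal.} Since $\sigma(\mathfrak{m})=\mathfrak{m}$ (in fact $\sigma$ is an automorphism of $A$ with $\sigma^{-1}(t^q)=t^{q/2}$, and it preserves $\mathfrak{m}$), we have
\[
\mathfrak{m}R=\bigoplus_i \mathfrak{m}u^i=R\mathfrak{m}.
\]
Hence $I$ is two-sided, and $R/I\cong \bigoplus_i (A/\mathfrak{m})u^i=\mathbb{Q}[u,u^{-1}]$.

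\textbf{Step 2: idempotence.} Over $A$ every generator factors as $t^{1/2^n}=(t^{1/2^{n+1}})^2$, so $\mathfrak{m}^2=\mathfrak{m}$. Therefore $I^2=\mathfrak{m}R\mathfrak{m}R=\mathfrak{m}^2R=I$.

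\textbf{Step 3: flatness.} As a right $R$-module, $R$ is free over $A$, with $R=\bigoplus_i u^iA$ on the left and $R=\bigoplus_i Au^i$ on the right. So $R\otimes_A-$ and $-\otimes_A R$ are exact, and we get
\[
I\cong \mathfrak{m}\otimes_A R \quad\text{(as right modules)},\qquad I\cong R\otimes_A\mathfrak{m}\quad\text{(as left modules)},
\]
the multiplication maps being isomorphisms by freeness. The ideal $\mathfrak{m}=\operatorname{colim}_n t^{1/2^n}A$ is a filtered colimit of free rank-one modules, since $A$ is a domain. Thus $\mathfrak{m}$ is flat over $A$, and base change preserves flatness, so $I$ is flat on both sides.

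\textbf{Step 4: the derived tensor identities.} Flatness gives
\[
I\otimes^{\mathbf{L}}_R M=I\otimes_R M=\mathfrak{m}\otimes_A M
\]
for any left $R$-module $M$. For $M=\mathbb{Q}[u,u^{-1}]=R/I$ this becomes $\mathfrak{m}\otimes_A (A/\mathfrak{m})\otimes_A R$. Now $\mathfrak{m}\otimes_A A/\mathfrak{m}=\mathfrak{m}/\mathfrak{m}^2=0$, so the second identity holds.

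For the third, $I\otimes_R I=\mathfrak{m}\otimes_A\mathfrak{m}\otimes_A R$. Flatness identifies $\mathfrak{m}\otimes_A\mathfrak{m}$ with $\mathfrak{m}^2=\mathfrak{m}$ via multiplication, so $I\otimes^{\mathbf{L}}_R I\simeq I$.

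For the first, apply $-\otimes^{\mathbf{L}}_R R/I$ to the triangle $I\to R\to R/I$. The first term vanishes by the second identity, so
\[
R/I\otimes^{\mathbf{L}}_R R/I\simeq R\otimes_R R/I=R/I.
\]

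The only delicate point is the bookkeeping in Step 3: identifying $I\otimes_R M$ with $\mathfrak{m}\otimes_A M$, where the right $A$-module structure on $M$ is its restriction. This requires that the $\sigma$-twist does not interfere. It does not, because $I\cong\mathfrak{m}\otimes_A R$ as $A$-$R$ bimodules, using the left $A$-action on $\mathfrak{m}$; and $\mathfrak{m}\otimes_A A/\mathfrak{m}=0$ holds regardless of twisting, since $\sigma$ preserves $\mathfrak{m}$.
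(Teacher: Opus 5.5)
Your proof is correct and follows the paper's argument essentially step for step. The paper also gets flatness of $\mathfrak{m}$ from a union of free rank-one modules and idempotence from square roots. It uses $\sigma^{\pm1}(\mathfrak{m})=\mathfrak{m}$ to get $\mathfrak{m}R=R\mathfrak{m}$ and the base-change identifications $I\cong\mathfrak{m}\otimes_A R\cong R\otimes_A\mathfrak{m}$, then the flat sequence $0\to I\to R\to R/I\to 0$ with $I\otimes_R R/I=0$. The only cosmetic difference is that you reduce $I\otimes_R R/I$ and $I\otimes_R I$ to $\mathfrak{m}\otimes_A A/\mathfrak{m}=\mathfrak{m}/\mathfrak{m}^2$ and $\mathfrak{m}\otimes_A\mathfrak{m}\cong\mathfrak{m}^2$ over $A$. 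The paper instead reads both off from $I/I^2=0$ and the triangle.
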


\begin{proof}
The ideal $\mathfrak{m}$ is flat because
\[
 \mathfrak{m}=\bigcup_{n\geq 0}t^{\frac{1}{2^n}}A
\]
is an increasing union of free rank-one $A$-modules. Moreover,
\[
 t^{\frac{1}{2^n}}=\left(t^{\frac{1}{2^{n+1}}}\right)^2
\]
implies $\mathfrak{m}^2=\mathfrak{m}$.

Since $\sigma^{\pm1}$ preserve $\mathfrak{m}$,
\[
\begin{gathered}
 \mathfrak{m}A[u,u^{-1};\sigma]=A[u,u^{-1};\sigma]\mathfrak{m},\\
 \bigl(\mathfrak{m}A[u,u^{-1};\sigma]\bigr)^2
       =\mathfrak{m}A[u,u^{-1};\sigma].
\end{gathered}
\]
The identifications
\[
\begin{aligned}
 \mathfrak{m}A[u,u^{-1};\sigma]
       &\cong\mathfrak{m}\otimes_A A[u,u^{-1};\sigma],\\
 \mathfrak{m}A[u,u^{-1};\sigma]
       &\cong A[u,u^{-1};\sigma]\otimes_A\mathfrak{m}
\end{aligned}
\]
show that $\mathfrak{m}A[u,u^{-1};\sigma]$ is flat as both a right and a
left $A[u,u^{-1};\sigma]$-module.

Consequently,
\begin{equation}\label{eq:flat}
\begin{aligned}
 0\longrightarrow\mathfrak{m}A[u,u^{-1};\sigma]
  &\longrightarrow A[u,u^{-1};\sigma]\\
  &\longrightarrow\mathbb{Q}[u,u^{-1}]\longrightarrow 0
\end{aligned}
\end{equation}
is a flat resolution of $\mathbb{Q}[u,u^{-1}]$ on either side. Since
\[
\begin{aligned}
 \mathfrak{m}A[u,u^{-1};\sigma]\otimes_{A[u,u^{-1};\sigma]}
       \mathbb{Q}[u,u^{-1}]
   & =\frac{\mathfrak{m}A[u,u^{-1};\sigma]}
                {\bigl(\mathfrak{m}A[u,u^{-1};\sigma]\bigr)^2}\\
   & =0,
\end{aligned}
\]
we obtain
\begin{equation}\label{eq:tensor-identities}
\begin{aligned}
 \mathbb{Q}[u,u^{-1}]\otimes_{A[u,u^{-1};\sigma]}^{\mathbf{L}}
       \mathbb{Q}[u,u^{-1}]&\simeq\mathbb{Q}[u,u^{-1}],\\
 \mathfrak{m}A[u,u^{-1};\sigma]\otimes_{A[u,u^{-1};\sigma]}^{\mathbf{L}}
       \mathbb{Q}[u,u^{-1}]&\simeq 0,\\
 \mathfrak{m}A[u,u^{-1};\sigma]\otimes_{A[u,u^{-1};\sigma]}^{\mathbf{L}}
       \mathfrak{m}A[u,u^{-1};\sigma]&\simeq
       \mathfrak{m}A[u,u^{-1};\sigma].
\end{aligned}
\end{equation}
\end{proof}

\section{A finite bimodule resolution}

We regard bimodules as left modules over
\[
 A[u,u^{-1};\sigma]\otimes_{\mathbb{Q}}\bigl(A[u,u^{-1};\sigma]\bigr)^{\mathrm{op}}.
\]
The relevant tensor elements satisfy
\[
\begin{aligned}
 &(u\otimes(u^{-1})^{\mathrm{op}})(t\otimes1)
   =(t^2\otimes1)(u\otimes(u^{-1})^{\mathrm{op}}),\\
 &(u\otimes(u^{-1})^{\mathrm{op}})(1\otimes t^{\mathrm{op}})
   =(1\otimes(t^2)^{\mathrm{op}})(u\otimes(u^{-1})^{\mathrm{op}}).
\end{aligned}
\]

\begin{proposition}
The left
$A[u,u^{-1};\sigma]\otimes_{\mathbb{Q}}\bigl(A[u,u^{-1};\sigma]\bigr)^{\mathrm{op}}$-module
$\mathfrak{m}A[u,u^{-1};\sigma]$ is perfect. More precisely, there is an exact
sequence
\begin{equation}\label{eq:resolution}
\begin{aligned}
0&\longrightarrow
 A[u,u^{-1};\sigma]\otimes_{\mathbb{Q}}\bigl(A[u,u^{-1};\sigma]\bigr)^{\mathrm{op}}\\
 &\xrightarrow{\alpha}
 \bigl(A[u,u^{-1};\sigma]\otimes_{\mathbb{Q}}\bigl(A[u,u^{-1};\sigma]\bigr)^{\mathrm{op}}\bigr)^2\\
 &\xrightarrow{\beta}
 A[u,u^{-1};\sigma]\otimes_{\mathbb{Q}}\bigl(A[u,u^{-1};\sigma]\bigr)^{\mathrm{op}}\\
 &\xrightarrow{\varepsilon}\mathfrak{m}A[u,u^{-1};\sigma]
 \longrightarrow0,
\end{aligned}
\end{equation}
where
\begin{equation}\label{eq:maps}
\begin{aligned}
\alpha(h)&=
 \begin{pmatrix}
 h\bigl[u\otimes(u^{-1})^{\mathrm{op}}-t^2\otimes1-t\otimes t^{\mathrm{op}}\bigr]\\
 -h\bigl(t^2\otimes1-1\otimes(t^2)^{\mathrm{op}}\bigr)
 \end{pmatrix},\\[3pt]
\beta(p,q)&=p(t\otimes1-1\otimes t^{\mathrm{op}})\\
 &\qquad+q(u\otimes(u^{-1})^{\mathrm{op}}-t\otimes1),\\
\varepsilon(h)&=h\cdot t.
\end{aligned}
\end{equation}
The coefficients multiply on the right, making these maps left-linear over
$A[u,u^{-1};\sigma]\otimes_{\mathbb{Q}}\bigl(A[u,u^{-1};\sigma]\bigr)^{\mathrm{op}}$.
\end{proposition}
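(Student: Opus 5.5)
The plan is to reduce the statement to a smaller skew Laurent ring, where it becomes a Koszul-type computation for a pair of elements that commute up to a twist. Write $R=A[u,u^{-1};\sigma]$ and $E=R\otimes_{\mathbb{Q}}R^{\mathrm{op}}$. Put $x=t\otimes1$, $y=1\otimes t^{\mathrm{op}}$ and $v=u\otimes(u^{-1})^{\mathrm{op}}$. By the displayed relations, $v$ normalizes $B:=A\otimes_{\mathbb{Q}}A$ through $\tau=\sigma\otimes\sigma$. Hence the subring $S\subset E$ generated by $B$ and $v^{\pm1}$ is the skew Laurent ring $B[v,v^{-1};\tau]$.

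\textbf{Step 1: reduction to $S$.} The element $u\otimes1$ normalizes $S$, so $E$ is free as a right $S$-module on the basis $\{u^i\otimes1\}_{i\in\mathbb{Z}}$. Every coefficient in \eqref{eq:maps} lies in $S$, so the complex \eqref{eq:resolution} is $E\otimes_S(-)$ applied to the analogous complex of free left $S$-modules. Here $\varepsilon$ maps onto $\mathfrak{m}$, regarded as an $S$-module via $a\otimes b\mapsto ab$ and $v\mapsto\sigma$. Indeed, $\mathfrak{m}R=\bigoplus_i\mathfrak{m}u^i$, and $u\otimes1$ permutes the summands freely, so $\mathfrak{m}R\cong E\otimes_S\mathfrak{m}$. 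Since $E$ is flat over $S$, it suffices to prove that
\[
0\to S\xrightarrow{\alpha}S^2\xrightarrow{\beta}S\xrightarrow{\varepsilon}\mathfrak{m}\to0
\]
is exact.

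\textbf{Step 2: complex and surjectivity.} The composites are direct checks. First, $\varepsilon\beta=0$ because $xt=yt$ and $v\cdot t=\sigma(t)=t^2=x\cdot t$. Second, $\beta\alpha=0$ because
\[
(v-x^2-xy)(x-y)-(x^2-y^2)(v-x)=0,
\]
which follows from $v(x-y)=(x^2-y^2)v$. For surjectivity, note that $v^{-k}\cdot t=\sigma^{-k}(t)=t^{1/2^k}$. Together with the $B$-action, this gives all of $\mathfrak{m}$.

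\textbf{Step 3: exactness via a two-step filtration.} I expect this to be the main obstacle, and I would handle it as follows. Right multiplication by $x-y$ on $S$ is injective, by comparing leading $v$-terms: $B$ is a domain and $\tau$ is injective. The key observation concerns the left ideal $J=S(x-y)$. Since $\tau(x^{1/2^k}-y^{1/2^k})=x^{1/2^{k-1}}-y^{1/2^{k-1}}$, the conjugate $v^{-k}(x-y)v^{k}$ lies in $J$ and equals $x^{1/2^k}-y^{1/2^k}$. Hence $J$ contains the whole kernel $I=\bigcup_k(x^{1/2^k}-y^{1/2^k})$ of $B\to A$, and it is therefore the two-sided ideal generated by $I$. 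It follows that
\[
S/J\cong A[v,v^{-1};\sigma]=:R'.
\]
Over $R'$, I would show two things by a division algorithm in $v$, using $v a=\sigma(a)v$. First, the map $R'\to\mathfrak{m}$ given by $\sum a_iv^i\mapsto\sum a_it^{2^i}$ has kernel exactly $R'(v-t)$. Second, right multiplication by $v-t$ on $R'$ is injective, by a leading-term argument. Finally, I would splice these facts along the short exact sequence $0\to J\to S\to R'\to0$. The key ingredients are that $J\cong S$ via right multiplication by $x-y$, and that right multiplication by $v-x$ on $S$ carries $J$ into $J$. This last point follows from the identity
\[
(x-y)(v-x)=(v-x^2-xy)(x-y),
\]
which is exactly the twisted commutation encoded by $\alpha$. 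The two-term complexes $S\to S$ (right multiplication by $x-y$) and $R'\to R'$ (right multiplication by $v-t$) then combine into the displayed complex. A diagram chase, i.e.\ the long exact sequence of the resulting double complex, gives exactness at $S^2$ and injectivity of $\alpha$. The most delicate point is this chase at the middle term, where the twisted relation must be used correctly.
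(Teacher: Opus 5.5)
\textbf{Verdict: genuine gap in Step 3.} Your Steps 1 and 2 are correct. The decomposition $E=\bigoplus_i(u^i\otimes1)S$ and the identification $E\otimes_S\mathfrak m\cong\mathfrak mA[u,u^{-1};\sigma]$ are exactly the paper's final base-change step. The checks $\varepsilon\beta=0$, $\beta\alpha=0$ and surjectivity are also fine. Step 3, however, rests on two false claims.

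First, $J=S(x-y)$ is only a left ideal. Left multiplication by $v^{-k}$ preserves it, but right multiplication by $v^{k}$ does not, so $v^{-k}(x-y)v^{k}=x^{1/2^k}-y^{1/2^k}$ is \emph{not} in $J$. Indeed, $b\,v^j(x-y)=b\,\tau^j(x-y)v^j$ gives $J=\bigoplus_{j}\tau^j(x-y)B\,v^j$. Hence $J\cap B=(x-y)B$, which does not contain $x^{1/2}-y^{1/2}$. Consequently $S/J\cong\bigoplus_j\bigl(B/\tau^j(x-y)B\bigr)v^j$ as left $B$-modules, and this is not $A[v,v^{-1};\sigma]$: already $B/(x-y)B$ has the zero divisors $x^{1/2}\pm y^{1/2}$.

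Second, the identity $(x-y)(v-x)=(v-x^2-xy)(x-y)$ is false. Since $v(x-y)=(x^2-y^2)v$, writing coefficients on the left, the $v$-coefficients of the two sides are $x-y$ and $x^2-y^2$. The correct relation is the one you used in Step 2, $(x^2-y^2)(v-x)=(v-x^2-xy)(x-y)$. Moreover, right multiplication by $v-x$ does not map $J$ into $J$: $(x-y)(v-x)=(x-y)v-x(x-y)$ has $v^1$-component $x-y$, while the $v^1$-component of $J$ is $(x^2-y^2)B$. So the splicing of $[S\xrightarrow{\cdot(x-y)}S]$ with $[R'\xrightarrow{\cdot(v-t)}R']$ that your diagram chase relies on does not exist.

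What the correct relation does give is a cone description. The complex is the cone of the chain map $[S\xrightarrow{\cdot(x^2-y^2)}S]\to[S\xrightarrow{\cdot(x-y)}S]$, given by right multiplication by $v-x^2-xy$ in degree $1$ and by $v-x$ in degree $0$. Both right multiplications are injective, so exactness reduces to two statements:
\begin{itemize}
\item $\cdot(v-x)\colon S/S(x^2-y^2)\to S/S(x-y)$ is injective. A top-$v$-degree induction handles this.
\item $\varepsilon$ induces an isomorphism $S/\bigl(S(x-y)+S(v-x)\bigr)\cong\mathfrak m$.
\end{itemize}
The second statement is the real content, and in the Laurent ring it is not a plain division algorithm. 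Modulo $S(v-x)$ you only get $v^{-k+1}\equiv x^{1/2^k}v^{-k}$, so negative powers of $v$ cannot be reduced into $B$; for example, $\varepsilon(v^{-1})=t^{1/2}\notin\varepsilon(B)=tA$. The same issue affects your claim that $\ker(R'\to\mathfrak m)=R'(v-t)$ follows from a division algorithm.

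The paper sidesteps this in two moves. It proves exactness over the polynomial ring $\mathbb{Q}[t,s][v;\tau]$ with the module $t\mathbb{Q}[t]$, where normal forms and leading coefficients behave cleanly. It then base-changes along the flat extension $(A\otimes_{\mathbb{Q}}A)[v,v^{-1};\sigma\otimes\sigma]=\bigcup_n v^{-n}\mathbb{Q}[t,s][v;\tau]$, which turns $t\mathbb{Q}[t]$ into $\mathfrak m$.
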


\begin{proof}
For exactness, consider the skew polynomial ring
$\mathbb{Q}[t,s][v;\tau]$, where
\[
 \tau(t)=t^2,\qquad \tau(s)=s^2,
\]
and its left module $t\mathbb{Q}[t]$.
Both $t,s$ act by multiplication by $t$, while
\[
v\cdot f(t)=f(t^2).
\]
The generator $t\in t\mathbb{Q}[t]$ satisfies
\[
(t-s)t=0,\qquad (v-t)t=0.
\]
Consider
\begin{equation}\label{eq:h-resolution}
\begin{aligned}
0&\longrightarrow\mathbb{Q}[t,s][v;\tau]
 \xrightarrow{a}\bigl(\mathbb{Q}[t,s][v;\tau]\bigr)^2\\
 &\xrightarrow{b}\mathbb{Q}[t,s][v;\tau]
 \xrightarrow{\epsilon}t\mathbb{Q}[t]\longrightarrow0,
\end{aligned}
\end{equation}
with
\[
\begin{aligned}
a(h)&=\bigl(h[v-t(t+s)],-h(t^2-s^2)\bigr),\\
b(p,q)&=p(t-s)+q(v-t),\\
\epsilon(h)&=h\cdot t.
\end{aligned}
\]
The skew relation gives
\[
[v-t(t+s)](t-s)-(t^2-s^2)(v-t)=0,
\]
so $ba=0$.

Every element of $\mathbb{Q}[t,s][v;\tau]$ has a unique normal form
\[
\sum_{j\geq0}f_j(t,s)v^j.
\]
Because $\tau$ is injective, $\mathbb{Q}[t,s][v;\tau]$ is a domain.
Right multiplication by $v-t(t+s)$ increases $v$-degree by one,
so $a$ is injective.

For a nonzero relation
\[
p(t-s)+q(v-t)=0,
\]
degree comparison gives
\[
\deg_v p=n\geq1,\qquad \deg_v q=n-1.
\]
If $p_n,q_{n-1}$ are the leading coefficients, then
\[
q_{n-1}=-p_n(t^{2^n}-s^{2^n}).
\]
Subtracting $a(p_nv^{n-1})$ cancels both leading terms. Induction proves
\[
\ker b=\operatorname{im}a.
\]
Finally, modulo the left ideal $\mathbb{Q}[t,s][v;\tau](v-t)$,
\[
v^j\equiv v^{j-1}t=t^{2^{j-1}}v^{j-1}.
\]
Repeated reduction removes every $v$. Reduction modulo
$\mathbb{Q}[t,s][v;\tau](t-s)$ then leaves a polynomial $f(t)$,
whose image in $t\mathbb{Q}[t]$ is $f(t)t$. Since $\mathbb{Q}[t]$
is a domain, this vanishes exactly when $f=0$. Hence
\[
\ker\epsilon=\operatorname{im}b,\qquad
\text{which proves \eqref{eq:h-resolution}.}
\]

Next consider the skew Laurent ring
\[
 (A\otimes_{\mathbb{Q}}A)[v,v^{-1};\sigma\otimes\sigma].
\]
As right $\mathbb{Q}[t,s][v;\tau]$-modules,
\begin{equation}\label{eq:u-union}
(A\otimes_{\mathbb{Q}}A)[v,v^{-1};\sigma\otimes\sigma]
=\bigcup_{n\geq0}v^{-n}\mathbb{Q}[t,s][v;\tau].
\end{equation}
Indeed, left multiplication by a sufficiently large power of $v$ clears
all negative $v$-powers and makes every dyadic exponent integral. Each
$v^{-n}\mathbb{Q}[t,s][v;\tau]$ is right-free of rank one, so
$(A\otimes_{\mathbb{Q}}A)[v,v^{-1};\sigma\otimes\sigma]$
is right-flat over $\mathbb{Q}[t,s][v;\tau]$.

Moreover,
\[
(A\otimes_{\mathbb{Q}}A)[v,v^{-1};\sigma\otimes\sigma]
\otimes_{\mathbb{Q}[t,s][v;\tau]}t\mathbb{Q}[t]
\cong\mathfrak{m}.
\]
The underlying direct system has transition maps
\[
f(t)\longmapsto f(t^2),
\]
and its $n$-th stage embeds into $A$ through
\[
f(t)\longmapsto f(t^{\frac{1}{2^n}}).
\]
Embed $(A\otimes_{\mathbb{Q}}A)[v,v^{-1};\sigma\otimes\sigma]$ in
$A[u,u^{-1};\sigma]\otimes_{\mathbb{Q}}\bigl(A[u,u^{-1};\sigma]\bigr)^{\mathrm{op}}$ by
\[
t\longmapsto t\otimes1,\qquad
s\longmapsto1\otimes t^{\mathrm{op}},\qquad
v\longmapsto u\otimes(u^{-1})^{\mathrm{op}}.
\]
There is a decomposition as right modules over
$(A\otimes_{\mathbb{Q}}A)[v,v^{-1};\sigma\otimes\sigma]$:
\begin{equation}\label{eq:e-decomp}
\begin{aligned}
&A[u,u^{-1};\sigma]\otimes_{\mathbb{Q}}\bigl(A[u,u^{-1};\sigma]\bigr)^{\mathrm{op}}\\
&\qquad=\bigoplus_{j\in\mathbb{Z}}(u^j\otimes1)
 (A\otimes_{\mathbb{Q}}A)[v,v^{-1};\sigma\otimes\sigma].
\end{aligned}
\end{equation}
Multiplication identifies
\[
\begin{aligned}
&\bigl(A[u,u^{-1};\sigma]\otimes_{\mathbb{Q}}
 \bigl(A[u,u^{-1};\sigma]\bigr)^{\mathrm{op}}\bigr)
 \otimes_{(A\otimes_{\mathbb{Q}}A)[v,v^{-1};\sigma\otimes\sigma]}
 \mathfrak{m}\\
&\qquad\cong\mathfrak{m}A[u,u^{-1};\sigma].
\end{aligned}
\]
Thus both induction steps are exact. Applying them to
\eqref{eq:h-resolution} gives \eqref{eq:resolution}, proving that
$\mathfrak{m}A[u,u^{-1};\sigma]$ is perfect as a left module over
$A[u,u^{-1};\sigma]\otimes_{\mathbb{Q}}\bigl(A[u,u^{-1};\sigma]\bigr)^{\mathrm{op}}$.
\end{proof}

\section{Smoothness}

Denote by $i\colon \mathcal{C}\hookrightarrow D(A[u,u^{-1};\sigma])$ the inclusion and by $r$ its right adjoint. It is easy to see $$r\circ i\simeq \operatorname{id}_{\mathcal{C}}, \ i\circ r\simeq - \otimes_{A[u,u^-1; \sigma]}^\mathbb{L} \mathfrak{m} A[u,u^{-1};\sigma],$$ so $\mathcal{C}$ is dualizable and $r$ preserves filtered colimits.

\begin{theorem}\label{thm:smoothness}
The category $\mathcal{C}$ is smooth over $\mathbb{Q}$.
\end{theorem}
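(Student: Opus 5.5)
Write $R=A[u,u^{-1};\sigma]$ and $I=\mathfrak{m}R$. The plan is to identify $\mathcal{C}$ with the category of modules over the idempotent ideal and to read off its identity bimodule. By the lemma in Section~2, $I$ is flat on both sides and $I\otimes^{\mathbf{L}}_R I\simeq I$. Hence the colocalization $i\circ r\simeq -\otimes^{\mathbf{L}}_R I$ realizes $\mathcal{C}$ as a smashing colocalization of $D(R)$.

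First I make dualizability concrete. The dual $\mathcal{C}^\vee$ is the analogous kernel in $D(R^{\mathrm{op}})$, namely right tensoring replaced by $I\otimes^{\mathbf{L}}_R-$. Accordingly, $\mathcal{C}\otimes\mathcal{C}^\vee$ embeds in $D(R\otimes_{\mathbb{Q}}R^{\mathrm{op}})$ as the essential image of the idempotent functor
\[
M\longmapsto I\otimes^{\mathbf{L}}_R M\otimes^{\mathbf{L}}_R I .
\]
This uses that the tensor product of dualizable categories is computed via the idempotent kernels $I\otimes I^{\mathrm{op}}$. The coevaluation $\mathrm{Sp}\to\mathcal{C}\otimes\mathcal{C}^\vee$ then sends the unit to the identity bimodule of $\mathcal{C}$. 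For $D(R)$ this bimodule is $R$ itself. For $\mathcal{C}$ it is the image of the diagonal $R$ under the idempotent above, which is $I\otimes^{\mathbf{L}}_R R\otimes^{\mathbf{L}}_R I\simeq I\otimes^{\mathbf{L}}_R I\simeq I$, computed in $D(R\otimes_{\mathbb{Q}}R^{\mathrm{op}})$.

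Smoothness means that this coevaluation preserves compact objects, i.e.\ that it has a colimit-preserving right adjoint. Equivalently, the diagonal object is compact in $\mathcal{C}\otimes\mathcal{C}^\vee$. By the Proposition of Section~3, $I$ is perfect over $R\otimes_{\mathbb{Q}}R^{\mathrm{op}}$, so $I$ is compact in $D(R\otimes_{\mathbb{Q}}R^{\mathrm{op}})$. Since $\mathcal{C}\otimes\mathcal{C}^\vee$ is a full subcategory closed under colimits, with colimit-preserving right adjoint $M\mapsto I\otimes M\otimes I$, an object of $\mathcal{C}\otimes\mathcal{C}^\vee$ that is compact in the ambient category is also compact in $\mathcal{C}\otimes\mathcal{C}^\vee$. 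Indeed, maps out of it into objects of the subcategory are computed in the ambient category, and filtered colimits agree because inclusion preserves colimits. This gives smoothness.

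The main obstacle is making rigorous the identification of $\mathcal{C}\otimes\mathcal{C}^\vee$ with the idempotent-image subcategory of bimodules, and of the diagonal with $I$. One must check that the tensor product of the two colocalizations $D(R)\to\mathcal{C}$ and $D(R^{\mathrm{op}})\to\mathcal{C}^\vee$ is again given by the tensor product of kernels. I would argue that $\mathcal{C}$ is a retract of $D(R)$ in $\Pr^L_{\mathrm{st}}$, via $i$ and the colimit-preserving $r$. Retracts are preserved by $-\otimes-$, so $\mathcal{C}\otimes\mathcal{C}^\vee$ is the retract of $D(R\otimes_{\mathbb{Q}}R^{\mathrm{op}})$ cut out by the composite idempotent. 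The coevaluation of a retract is the image of the ambient coevaluation under this idempotent, which yields the description above.
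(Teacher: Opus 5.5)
Your proposal is correct and is essentially the paper's argument. The paper uses the fully faithful, colimit-preserving functor $J(F)=iFr$ from $\operatorname{Fun}^L_{\mathbb{Q}}(\mathcal{C},\mathcal{C})\simeq\mathcal{C}\otimes\mathcal{C}^\vee$ to $A[u,u^{-1};\sigma]$-bimodules, which is exactly your embedding; it observes that $J(\mathrm{id})=ir$ corresponds to $I$ and concludes from the perfect resolution of Section~3. One step should be stated carefully: full faithfulness of the embedding comes from the adjunction $i\dashv r$ having invertible unit (the paper argues via $\operatorname{Lan}_iF\simeq Fr$), not from the bare retract $ri\simeq\mathrm{id}$ alone, though you already rely on this implicitly when you invoke the colimit-preserving right adjoint $M\mapsto I\otimes^{\mathbf{L}}_R M\otimes^{\mathbf{L}}_R I$.
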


\begin{proof}

Define
\[
\begin{aligned}
J:\operatorname{Fun}^L_\mathbb{Q}(\mathcal{C},\mathcal{C})
&\longrightarrow\operatorname{Fun}^L_\mathbb{Q}(D(A[u,u^{-1};\sigma]),D(A[u,u^{-1};\sigma])),\\
F&\longmapsto iFr.
\end{aligned}
\]

The functor $J$ preserves colimits and is fully faithful. Indeed,
postcomposition with $i$ is fully faithful, and
\[
\operatorname{Lan}_iF\simeq Fr
\]
because $i\dashv r$. Restricting along $i$ recovers $Fri\simeq F$.

Derived Eilenberg--Watts identifies continuous endofunctors of
$D(A[u,u^{-1};\sigma])$ with derived $A[u,u^{-1};\sigma]$-bimodules:
\[
B\longleftrightarrow
\bigl(X\mapsto X\otimes_{A[u,u^{-1};\sigma]}^{\mathbf{L}}B\bigr).
\]
Under this identification, the image under $J$ of the identity functor
of the category $\mathcal{C}$ satisfies
\[
 ir\longleftrightarrow\mathfrak{m}A[u,u^{-1};\sigma].
\]
By \eqref{eq:resolution}, this object is compact.

\end{proof}

\section{Failure of compact generation}

We use the fraction field $K=\operatorname{Frac}(A)$, the skew Laurent ring
$K[u,u^{-1};\sigma]$, and the field
\[
    \mathbb{Q}(u)=\operatorname{Frac}\bigl(\mathbb{Q}[u,u^{-1}]\bigr).
\]
For an Ore domain, $\operatorname{Frac}$ denotes its classical division
ring of fractions.

\begin{lemma}
The ring $K[u,u^{-1};\sigma]$ is an Ore domain on both
sides. Its division ring of fractions
$\operatorname{Frac}(K[u,u^{-1};\sigma])$ is flat as both
a left and a right $A[u,u^{-1};\sigma]$-module.
\end{lemma}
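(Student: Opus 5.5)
The plan is to treat the two claims separately, first the Ore property via a degree argument and then flatness via localization.

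\emph{Ore property.} Since $K$ is a field and $\sigma$ extends to an automorphism of $K$, the ring $K[u,u^{-1};\sigma]$ is a domain. Indeed, for nonzero elements the product of the lowest $u$-degree terms is $a\sigma^i(b)u^{i+j}\neq0$. For the Ore property I would use the skew polynomial ring $K[u;\sigma]$. Because $\sigma$ is an automorphism of $K$, there is both a left and a right division algorithm by degree in $u$. Hence $K[u;\sigma]$ is a left and right principal ideal domain, and in particular left and right Noetherian. A Noetherian domain is Ore on both sides (Goldie). The ring $K[u,u^{-1};\sigma]$ is the localization of $K[u;\sigma]$ at the powers of $u$, which form an Ore set because $u a=\sigma(a)u$. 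A localization of an Ore domain at an Ore set is again an Ore domain, with the same division ring of fractions $D$. Alternatively, one can check directly that $K[u,u^{-1};\sigma]$ is Noetherian on both sides, since it is a skew Laurent ring over a field with an automorphism.

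\emph{Flatness.} I would factor $A[u,u^{-1};\sigma]\to K[u,u^{-1};\sigma]\to D$ and prove each map is a flat Ore localization.
\begin{itemize}
\item The second map is the classical localization of an Ore domain at all nonzero elements. Such a localization is flat on the side where it is Ore, by the standard fact that Ore localization $S^{-1}R\cong S^{-1}R\otimes_R R$ is exact. Both-sided Ore gives flatness on both sides.
\item For the first map, let $S=A\setminus\{0\}\subset A[u,u^{-1};\sigma]$. Since $\sigma(A\setminus 0)\subseteq A\setminus 0$ and $\sigma^{-1}(A\setminus0)\subseteq A\setminus0$, the set $S$ is stable under $\sigma^{\pm1}$.
\item Given $s\in S$ and $x=\sum a_iu^i$, I need $s'x=x's$ for some $s'\in S$. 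The identity $u^is=\sigma^i(s)u^i$ lets me move $s$ past $u^i$ at the cost of $\sigma^{\pm i}$. Since $A$ is commutative, taking $s'=\prod_i\sigma^{i}(s)$, over the finitely many $i$ in the support, should clear denominators. I would check the precise bookkeeping on each side.
\item It follows that $S$ is a two-sided Ore set, and $S^{-1}A[u,u^{-1};\sigma]=K[u,u^{-1};\sigma]$ because $A\to K$ is the localization at $S$. This gives flatness on both sides.
\end{itemize}
Composing the two flat localizations gives a flat map on both sides. Equivalently, $D$ is the Ore localization of $A[u,u^{-1};\sigma]$ at all nonzero elements, since $A[u,u^{-1};\sigma]$ is itself an Ore domain with the same division ring of fractions.

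\emph{Main obstacle.} The delicate step is checking the Ore condition for $S=A\setminus0$ on both sides. There, $\sigma$ multiplies exponents by $2$, $\sigma^{-1}$ halves them, and both must stay inside $A$. This holds precisely because $A$ contains all dyadic roots, so $\sigma$ is an automorphism of $A$ rather than merely an injection. I would also state explicitly the standard fact that right (respectively left) Ore localizations are flat as left (respectively right) modules.
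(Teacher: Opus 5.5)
Your proposal is correct, and its skeleton matches the paper's. Both arguments factor $A[u,u^{-1};\sigma]\to K[u,u^{-1};\sigma]\to\operatorname{Frac}(K[u,u^{-1};\sigma])$ and show that each map is a flat Ore localization.

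The real difference is how you get the Ore property of $K[u,u^{-1};\sigma]$. The paper uses a direct dimension count. For skew polynomials $f,g$ of degrees $m,n$ and $d\geq m+n$, the right multiples of $f$ and of $g$ of degree at most $d$ are right $K$-subspaces of dimensions $d-m+1$ and $d-n+1$. They sit in a space of dimension $d+1$, so they meet nontrivially, and the same holds on the left. You instead use the two-sided division algorithm to make $K[u;\sigma]$ a left and right PID, hence Noetherian, hence Ore, and then invert the normal element $u$.

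What each route gives you:
\begin{itemize}
\item The paper's count is self-contained and even bounds the degree of the common multiples.
\item Your route relies on standard structure theory: Noetherian domains are Ore, and an Ore localization of an Ore domain is Ore with the same fraction field. In exchange, it shows exactly where bijectivity of $\sigma$ on $K$ enters, namely the division algorithm on one side. The paper uses this bijectivity tacitly.
\end{itemize}

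For the first flatness step, the paper writes $K[u,u^{-1};\sigma]=\bigcup_{0\neq f\in A}f^{-1}A[u,u^{-1};\sigma]$, a directed union of free rank-one right modules, and uses $A[u,u^{-1};\sigma]f^{-1}$ on the left. This is just the concrete form of your check that $A\setminus\{0\}$ is a two-sided Ore set plus flatness of Ore localization.

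Your deferred bookkeeping does work out:
\begin{itemize}
\item For $s'x=x's$, take $s'=\prod_i\sigma^i(s)$ and $x'=\sum_i a_i\prod_{j\neq i}\sigma^j(s)\,u^i$.
\item For $xs'=sx'$, take $s'=\prod_i\sigma^{-i}(s)$ and $x'=\sum_i a_i\prod_{j\neq i}\sigma^{i-j}(s)\,u^i$.
\end{itemize}
Here $i$ runs over the support of $x$. Both choices use $\sigma^{\pm1}(A\setminus\{0\})\subseteq A\setminus\{0\}$, as you anticipated.
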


\begin{proof}
The Ore conditions for $K[u,u^{-1};\sigma]$ can be
checked by dimension counting. Given skew polynomials $f,g$ of degrees
$m,n$, their right multiples of degree at most $d$, for $d\geq m+n$, form
subspaces of dimensions
\[
    d-m+1,\qquad d-n+1
\]
inside a right $K$-vector space of dimension $d+1$.
They therefore have a nonzero intersection. This gives common right
multiples. Left vector spaces give common left multiples.

The division ring
$\operatorname{Frac}(K[u,u^{-1};\sigma])$ is flat on
both sides over $A[u,u^{-1};\sigma]$. For example,
\[
    K[u,u^{-1};\sigma]
    =\bigcup_{0\neq f\in A}f^{-1}A[u,u^{-1};\sigma]
\]
is a directed union of free rank-one right $A[u,u^{-1};\sigma]$-modules.
The corresponding left-module statement uses $A[u,u^{-1};\sigma]f^{-1}$.
The same Ore-fraction argument shows that
\[
 \operatorname{Frac}(K[u,u^{-1};\sigma])
\]
is flat on both sides over $K[u,u^{-1};\sigma]$.
\end{proof}

\begin{lemma}
For every finite matrix $Q$ over $A[u,u^{-1};\sigma]$, we have
\begin{equation}\label{eq:rank}
    \operatorname{rank}_{\operatorname{Frac}(K[u,u^{-1};\sigma])}(Q)
    \geq \operatorname{rank}_{\mathbb{Q}(u)}(\overline{Q}),
\end{equation}
where $\overline{Q}$ is obtained by setting every positive power of $t$
equal to zero.
\end{lemma}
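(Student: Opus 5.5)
Since $Q$ has only finitely many entries, it involves only finitely many dyadic powers of $t$. I will first make all of them integral. If the entries lie in $\mathbb{Q}[t^{1/2^N}][u,u^{-1};\sigma]$, set $s=t^{1/2^N}$. Then $\sigma(s)=s^2$, so $Q$ is a matrix over the subring $R=\mathbb{Q}[s][u,u^{-1};\sigma]$. Passing from $R$ to the bigger ring does not change the rank over $\operatorname{Frac}(K[u,u^{-1};\sigma])$. Indeed, $\operatorname{Frac}(\mathbb{Q}(s)[u,u^{-1};\sigma])$ embeds into $\operatorname{Frac}(K[u,u^{-1};\sigma])$ by universality of Ore localization, and rank over a division ring is invariant under extension to a larger division ring. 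The rank here is the inner rank, i.e.\ the dimension of the row space, and it is preserved by such an extension.

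The core step is to produce an $r\times r$ submatrix, where $r=\operatorname{rank}_{\mathbb{Q}(u)}(\overline{Q})$, that stays invertible over the big division ring. Choose an $r\times r$ submatrix $P$ of $Q$ with $\overline{P}$ invertible over $\mathbb{Q}(u)$. It then suffices to show that $P$ is invertible over $D=\operatorname{Frac}(\mathbb{Q}(s)[u,u^{-1};\sigma])$, since full rank of $P$ forces $\operatorname{rank}Q\ge r$.

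To prove this, I will use a valuation-type argument adapted to the skew setting. The $s$-adic valuation on $\mathbb{Q}[s]$ satisfies $v(\sigma(f))=2v(f)$, so it is not $\sigma$-invariant and does not extend naively. I would therefore avoid valuations and argue by contradiction with a linear dependency, as follows.
\begin{enumerate}
\item Suppose the rows of $P$ are left-dependent over $D$. Clearing denominators with the left Ore condition gives a nonzero row vector $w$ over $R$ with $wP=0$.
\item Write $w=\sum_k s^k w_k$, where the $w_k$ have entries in $\mathbb{Q}[u,u^{-1}]$; this uses $s^k$ on the left, with normal form $a u^i$.
\item Take the minimal $k$ with $w_k\ne0$, and divide $w$ on the left by $s^k$. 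This is legitimate: $wP=0$ is a statement in $R$, $R$ is a domain, and left multiplication by $s^k$ is injective, so factoring $s^k$ out on the left is fine because $R$ is $s$-torsion-free on the left.
\item Now $w_0\neq0$, and reducing modulo the two-sided ideal $sR$ gives $\overline{w}\,\overline{P}=w_0\overline{P}=0$ over $\mathbb{Q}[u,u^{-1}]$. This contradicts the invertibility of $\overline{P}$.
\end{enumerate}
The reduction step uses that $sR=Rs$ is a two-sided ideal, which holds because $us=s^2u$, and that $R/sR\cong\mathbb{Q}[u,u^{-1}]$.

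The main obstacle is the justification in step 3. There I must check that every element of $R$ whose $s$-expansion has all coefficients divisible by $s^k$ lies in $s^kR$ as a left multiple. Since we only factor on the left in the normal form $\sum a_i u^i$, this is immediate. I also have to be careful that "rank" means the same thing on both sides: over a division ring, left row rank equals right column rank, so no issue arises. The remaining routine verifications are the embedding of Ore division rings and that the submatrix rank criterion holds over division rings; both follow from standard Gaussian elimination.
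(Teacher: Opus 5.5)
Your core mechanism is the paper's: clear denominators, factor the minimal power of the uniformizer out on the left, and reduce modulo a two-sided ideal with commutative quotient. However, the reduction to integral exponents that you build it on fails.

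The set $R=\mathbb{Q}[s][u,u^{-1};\sigma]$ is not a ring, because $\sigma$ is not an automorphism of $\mathbb{Q}[s]$. Concretely, $u^{-1}s=\sigma^{-1}(s)u^{-1}=t^{1/2^{N+1}}u^{-1}\notin R$. In fact the subring of $A[u,u^{-1};\sigma]$ generated by $s$ and $u^{\pm1}$ is all of $A[u,u^{-1};\sigma]$, since $u^{-n}su^{n}=t^{1/2^{N+n}}$. So $\mathbb{Q}(s)[u,u^{-1};\sigma]$ and $D$ are undefined. Steps 1, 3 and 4 (Ore clearing into $R$, $R$ a domain, $R/sR\cong\mathbb{Q}[u,u^{-1}]$) are therefore statements about a nonexistent ring.

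The claim $sR=Rs$ is also false, even in the genuine ring $R'=\mathbb{Q}[s][u;\sigma]$. There $R's$ consists of the elements $\sum_i a_is^{2^i}u^i$, so $su\notin R's$. What is true is $R's\subseteq sR'$, and that is enough for $sR'$ to be a two-sided ideal.

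There are two repairs.

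\emph{Polynomial version.} Right-multiply $Q$ by $u^M$, which changes neither rank, so that its entries lie in $R'$. Use that $\mathbb{Q}(s)[u;\sigma]$ is left Ore: the left division algorithm makes it a left principal ideal domain. It is not right Ore, since $\sigma$ is not surjective, so use left fractions. Then run your steps 1--4 with $R'/sR'\cong\mathbb{Q}[u]$.

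\emph{The paper's route.} Skip the reduction to integral exponents entirely. Your reason for avoiding valuations is mistaken: $\sigma$ need not preserve $\nu$. The relation $\nu\circ\sigma=2\nu$ already shows that $\sigma^{\pm1}$ preserve $A_{\mathfrak m}$ and its maximal ideal, so reduction $A_{\mathfrak m}[u,u^{-1};\sigma]\to\mathbb{Q}[u,u^{-1}]$ is a ring homomorphism. The paper clears left denominators into $K[u,u^{-1};\sigma]$ and lets $q$ be the minimal valuation among all coefficients. It then left-multiplies the relation by $t^{-q}$. A scalar on the left of $\sum_i a_iu^i$ is not twisted, so every coefficient valuation drops by exactly $q$. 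This is your steps 2--4 with a dyadic $q$ in place of the integer $k$, and it needs no integrality of exponents.
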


\begin{proof}
Define the valuation
\[
    \nu\left(\sum_q a_qt^q\right)=\min\{q:a_q\neq 0\},
    \qquad \nu(f/g)=\nu(f)-\nu(g).
\]
Its valuation ring is $A_{\mathfrak{m}}$, with residue field $\mathbb{Q}$.
Since
\[
    \nu(\sigma(f))=2\nu(f),
\]
both $\sigma$ and $\sigma^{-1}$ preserve $A_{\mathfrak{m}}$ and its maximal
ideal. Thus reduction gives
\begin{equation}\label{eq:reduction}
    A_{\mathfrak{m}}[u,u^{-1};\sigma]\longrightarrow\mathbb{Q}[u,u^{-1}].
\end{equation}
Choose rows of $\overline{Q}$ independent over $\mathbb{Q}(u)$. If their
lifts were dependent over
\[
 \operatorname{Frac}(K[u,u^{-1};\sigma]),
\]
there would be a nonzero left relation between them.

Clear common left Ore denominators so its coefficients lie in
$K[u,u^{-1};\sigma]$. Let $q$ be the smallest valuation
of any nonzero Laurent coefficient occurring in the relation.
Multiplication of the entire relation on the left by $t^{-q}$ makes every
coefficient valuation nonnegative and at least one zero.

Reduction via \eqref{eq:reduction} then gives a nonzero relation between
the specialized rows, contradicting their independence. This proves
\eqref{eq:rank}.
\end{proof}

\begin{lemma}
For every finitely generated projective right $A[u,u^{-1};\sigma]$-module
$P$, the scalar extensions
\[
\begin{gathered}
    P\otimes_{A[u,u^{-1};\sigma]}
    \operatorname{Frac}(K[u,u^{-1};\sigma]),\\
    P\otimes_{A[u,u^{-1};\sigma]}\mathbb{Q}(u)
\end{gathered}
\]
have equal dimensions over their respective division rings. The rank of a
map between such modules over
\[
 \operatorname{Frac}(K[u,u^{-1};\sigma])
\]
is at least its rank over $\mathbb{Q}(u)$.
\end{lemma}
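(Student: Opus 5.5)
The plan is to reduce to idempotent matrices and apply the rank inequality \eqref{eq:rank} twice. Write $R=A[u,u^{-1};\sigma]$, $D=\operatorname{Frac}(K[u,u^{-1};\sigma])$ and $k=\mathbb{Q}(u)$. Both ring maps $R\to D$ and $R\to k$ are specializations of the kind handled in the preceding lemma: the second is $R\to\mathbb{Q}[u,u^{-1}]\to k$, which sets positive powers of $t$ to zero.

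\emph{Step 1: reduction to idempotents.} Since $P$ is finitely generated projective, choose $P\oplus P'\cong R^n$. Then $P\cong eR^n$ for an idempotent $n\times n$ matrix $e$ over $R$. Extension of scalars is additive, so
\[
P\otimes_R D\cong e_D D^n,\qquad P\otimes_R k\cong \bar e\, k^n,
\]
where $e_D$ and $\bar e$ are the images of $e$. Over a division ring, the dimension of the image of an idempotent equals its rank. Hence
\[
\dim_D(P\otimes_R D)=\operatorname{rank}_D(e),\qquad \dim_k(P\otimes_R k)=\operatorname{rank}_k(\bar e).
\]
One should be careful with sides. Right modules correspond to column spaces, while \eqref{eq:rank} was proved for row rank via left relations. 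Over a division ring, row rank and column rank agree, so this causes no trouble.

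\emph{Step 2: equality via the complementary idempotent.} Apply \eqref{eq:rank} to both $e$ and $1-e$:
\[
\operatorname{rank}_D(e)\geq\operatorname{rank}_k(\bar e),\qquad \operatorname{rank}_D(1-e)\geq\operatorname{rank}_k(1-\bar e).
\]
For any idempotent $f$ over a division ring, the splitting $D^n=fD^n\oplus(1-f)D^n$ gives $\operatorname{rank}(f)+\operatorname{rank}(1-f)=n$. So both sides of the two inequalities sum to $n$, which forces both to be equalities. This proves the dimension claim.

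\emph{Step 3: ranks of maps.} Let $\phi\colon P\to P'$ be a map with $P\cong eR^n$ and $P'\cong e'R^m$. Extend $\phi$ by zero on the complement $(1-e)R^n$ to get an $m\times n$ matrix $Q=e'Qe$ over $R$. Its image in $R^m$ is exactly $\phi(P)$. After base change, the image of $Q$ is the image of $\phi\otimes D$, respectively $\phi\otimes k$, because $Q$ kills the extended complement. Therefore $\operatorname{rank}_D(\phi\otimes D)=\operatorname{rank}_D(Q)$ and $\operatorname{rank}_k(\phi\otimes k)=\operatorname{rank}_k(\overline Q)$. Inequality \eqref{eq:rank} then gives the claim.

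The main obstacle I expect is bookkeeping rather than anything new. The key points are: checking that reduction $\overline{(\cdot)}$ commutes with matrix multiplication, so that $\bar e$ is again idempotent and $\overline{e'Qe}=\bar e'\overline Q\bar e$; and matching left versus right conventions for rank in \eqref{eq:rank}. Multiplicativity holds because $R\to\mathbb{Q}[u,u^{-1}]$ is a ring homomorphism, namely the quotient by $\mathfrak{m}R$. For the sides, I would reduce to a single notion of rank: the maximal size of a nonsingular square submatrix over a division ring, equivalently row or column rank.
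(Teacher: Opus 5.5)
Your proposal is correct and follows the paper's proof. You represent $P$ by an idempotent $e$, apply \eqref{eq:rank} to $e$ and $1-e$, and note that both pairs of ranks sum to $n$, which forces equality; maps are then handled by extending by zero on projective complements, which is what the paper's brief ``choosing projective complements'' means. Your remarks on row versus column rank over a division ring and on $\overline{e'Qe}=\bar e'\,\overline{Q}\,\bar e$ supply details the paper leaves implicit.
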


\begin{proof}
For a finitely generated projective $A[u,u^{-1};\sigma]$-module $P$, choose
an idempotent matrix $e$ representing it. Apply \eqref{eq:rank} to $e$ and
$1-e$. Their ranks sum to the same ambient free rank over the two division rings, so
both inequalities must be equalities:
\begin{equation}\label{eq:dimension}
\begin{aligned}
    &\dim_{\operatorname{Frac}(K[u,u^{-1};\sigma])}\\[-2pt]
    &\quad\bigl(P\otimes_{A[u,u^{-1};\sigma]}
      \operatorname{Frac}(K[u,u^{-1};\sigma])\bigr)\\
    &\qquad=\dim_{\mathbb{Q}(u)}
      \bigl(P\otimes_{A[u,u^{-1};\sigma]}\mathbb{Q}(u)\bigr).
\end{aligned}
\end{equation}
By choosing projective complements, the rank inequality also applies to
maps between finitely generated projectives.
\end{proof}

\begin{corollary}
For any perfect complex $P$,
\begin{equation}\label{eq:vanishing}
\begin{gathered}
    P\otimes_{A[u,u^{-1};\sigma]}^{\mathbf{L}}\mathbb{Q}[u,u^{-1}]\simeq 0\\
    \Longrightarrow\quad
    P\otimes_{A[u,u^{-1};\sigma]}^{\mathbf{L}}
    \operatorname{Frac}(K[u,u^{-1};\sigma])\simeq 0.
\end{gathered}
\end{equation}
\end{corollary}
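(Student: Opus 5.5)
Represent the perfect complex $P$ by a bounded complex of finitely generated projective right $A[u,u^{-1};\sigma]$-modules
\[
 0\to P_N\xrightarrow{d_N}P_{N-1}\to\cdots\xrightarrow{d_{1}}P_0\to 0 .
\]
Set $R=A[u,u^{-1};\sigma]$, $D=\operatorname{Frac}(K[u,u^{-1};\sigma])$ and $k=\mathbb{Q}(u)$. Write $n_i$ for the common dimension of $P_i\otimes_R D$ and $P_i\otimes_R k$; these agree by \eqref{eq:dimension}.

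Let $\rho_i$ and $\bar\rho_i$ be the ranks of $d_i$ over $D$ and over $k$. By the preceding lemma, $\rho_i\ge\bar\rho_i$ for every $i$.

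First I pass from $\mathbb{Q}[u,u^{-1}]$ to $k$. The localization $\mathbb{Q}[u,u^{-1}]\to k$ is flat. Hence the hypothesis $P\otimes^{\mathbf L}_R\mathbb{Q}[u,u^{-1}]\simeq0$ gives
\[
 P\otimes^{\mathbf L}_R k\simeq \bigl(P\otimes^{\mathbf L}_R\mathbb{Q}[u,u^{-1}]\bigr)\otimes_{\mathbb{Q}[u,u^{-1}]}k\simeq 0 .
\]
Since the $P_i$ are projective, these derived tensor products are computed termwise. So the complex $P\otimes_R k$ of finite-dimensional $k$-vector spaces is exact. Exactness in degree $i$ says $n_i=\bar\rho_i+\bar\rho_{i+1}$, with the conventions $\bar\rho_0=\bar\rho_{N+1}=0$.

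Next I count dimensions over $D$. The complex $P\otimes_R D$ is a complex of finite-dimensional vector spaces over the division ring $D$. Its homology in degree $i$ has dimension
\[
 h_i=n_i-\rho_i-\rho_{i+1}\ge 0 .
\]
Comparing with the $k$-side gives
\[
 h_i=(\bar\rho_i-\rho_i)+(\bar\rho_{i+1}-\rho_{i+1})\le 0,
\]
because $\rho_j\ge\bar\rho_j$ for all $j$. Therefore $h_i=0$ for every $i$, so $P\otimes_R D$ is exact, i.e.\ $P\otimes^{\mathbf L}_R D\simeq0$. (As a byproduct, all the rank inequalities are equalities.)

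The only delicate point is bookkeeping. One must use a single projective model of $P$ for both base changes, so that the same maps $d_i$ are compared. One must also check that the rank inequality for maps between projectives from the previous lemma is applied with compatible identifications of $P_i\otimes D$ and $P_i\otimes k$; choosing complements $P_i\oplus P_i'\cong R^{m_i}$ reduces this to the matrix statement \eqref{eq:rank}.

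Rank–nullity over a division ring is standard, so no further obstacle is expected. Flatness of $D$ over $R$, from the earlier lemma, is not needed here, since $P$ consists of projectives.
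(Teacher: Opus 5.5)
Your proof is correct and follows the paper's argument. Like the paper, you fix one bounded complex of finitely generated projectives, take the term dimensions from the dimension-equality lemma and the rank bounds from the specialization rank inequality, and obtain exactness over $\operatorname{Frac}(K[u,u^{-1};\sigma])$ by the same count. You also justify exactness over $\mathbb{Q}(u)$ through flatness of the localization $\mathbb{Q}[u,u^{-1}]\to\mathbb{Q}(u)$, a step the paper leaves implicit.
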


\begin{proof}
Represent $P$ by a bounded complex of finitely generated projectives. Its
extension to $\mathbb{Q}(u)$ is exact. In a fixed degree, suppose the term
has dimension $n$, and the incoming and outgoing differentials have ranks
$a,b$. Exactness means
\[
    a+b=n.
\]
Over $\operatorname{Frac}(K[u,u^{-1};\sigma])$, the
term still has dimension $n$, by \eqref{eq:dimension}, while the two
differential ranks are at least $a,b$, by \eqref{eq:rank}. Since their
composite is zero, their ranks sum to at most $n$. Equality follows, so the
complex is also exact over
$\operatorname{Frac}(K[u,u^{-1};\sigma])$.
\end{proof}

\begin{theorem}\label{thm:not-compactly-generated}
The category $\mathcal{C}$ is not compactly generated.
\end{theorem}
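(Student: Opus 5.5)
We argue by contradiction, using the functor announced in the introduction. Set $R=A[u,u^{-1};\sigma]$ and $D=\operatorname{Frac}(K[u,u^{-1};\sigma])$, and consider
\[
 \Phi\colon \mathcal{C}\longrightarrow D(D^{\mathrm{op}}),\qquad
 X\longmapsto i(X)\otimes^{\mathbf{L}}_{R} D .
\]
This functor is exact and colimit-preserving, being a composite of the colimit-preserving inclusion $i$ and a derived tensor product. If $\mathcal{C}$ were compactly generated, then any exact colimit-preserving functor vanishing on all compact objects of $\mathcal{C}$ would vanish on the localizing subcategory they generate, which is all of $\mathcal{C}$. So it suffices to show two things: that $\Phi$ kills every compact object of $\mathcal{C}$, and that $\Phi\neq 0$.

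For the vanishing on compacts, the first step is to identify the compact objects of $\mathcal{C}$. The right adjoint $r$ of $i$ preserves filtered colimits, as noted in the Smoothness section. Hence $i$ preserves compact objects, and every compact $X\in\mathcal{C}$ has $i(X)$ a perfect $R$-complex $P$. Membership in $\mathcal{C}$ means $P\otimes^{\mathbf{L}}_R\mathbb{Q}[u,u^{-1}]\simeq 0$. The Corollary (\ref{eq:vanishing}) then gives $P\otimes^{\mathbf{L}}_R D\simeq 0$, that is, $\Phi(X)=0$. This step only assembles the results proved above. The one point to state explicitly is that "compact in $\mathcal{C}$" implies "perfect in $D(R)$", and this follows because $i$ is a left adjoint whose right adjoint commutes with filtered colimits.

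For nonvanishing, the natural candidate is $X=r(R)$, whose image is $i r(R)\simeq \mathfrak{m}R$. By the flatness in Lemma 2.1, the kernel $\mathfrak{m}R$ is a genuine module, and $\mathfrak{m}R\otimes^{\mathbf{L}}_R\mathbb{Q}[u,u^{-1}]\simeq 0$, so it lies in $\mathcal{C}$. Since $D$ is flat over $R$ by Lemma 4.1, we get
\[
 \Phi(\mathfrak{m}R)=\mathfrak{m}R\otimes_R D .
\]
Tensoring the short exact sequence \eqref{eq:flat} with the flat module $D$ gives
\[
 0\to \mathfrak{m}R\otimes_R D\to D\to \mathbb{Q}[u,u^{-1}]\otimes_R D\to 0 .
\]
The right-hand term is zero: the element $t$ becomes invertible in $D$ but acts as $0$ on $\mathbb{Q}[u,u^{-1}]$, so $\mathbb{Q}[u,u^{-1}]\otimes_R D$ is annihilated by the unit $t$. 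Therefore $\Phi(\mathfrak{m}R)\cong D\neq 0$.

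Thus $\Phi$ is a nonzero exact colimit-preserving functor vanishing on all compact objects, contradicting compact generation. I expect no serious obstacle, since the real difficulty, the rank specialization, is already contained in the Corollary. The points needing care are the side conventions: Lemma 4.1 gives flatness of $D$ on both sides, so right $R$-modules tensored with $D$ are fine. Note also that the vanishing on compacts uses the Corollary, which is stated for perfect complexes in $D(R)$; this is exactly why the compactness-preservation of $i$ is needed.
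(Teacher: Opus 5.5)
Your proposal is correct and follows essentially the same route as the paper. It uses the same functor $X\mapsto iX\otimes^{\mathbf{L}}_{A[u,u^{-1};\sigma]}\operatorname{Frac}(K[u,u^{-1};\sigma])$, obtains vanishing on compacts from compact-preservation of $i$ together with \eqref{eq:vanishing}, and obtains nonvanishing on $\mathfrak{m}A[u,u^{-1};\sigma]$ by tensoring \eqref{eq:flat} with the flat division ring in which $t$ is invertible; the only differences are cosmetic (you realize $\mathfrak{m}A[u,u^{-1};\sigma]$ as $ir$ of the ring, and you use the letter $D$ for both the division ring and the derived category, which you should avoid).
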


\begin{proof}
Define a functor $G$ from the category $\mathcal{C}$ to
$D(\operatorname{Frac}(K[u,u^{-1};\sigma]))$ by
\[
    G(X)=iX\otimes_{A[u,u^{-1};\sigma]}^{\mathbf{L}}
    \operatorname{Frac}(K[u,u^{-1};\sigma]).
\]
If $X$ is compact in the category $\mathcal{C}$, then $iX$ is compact in
$D(A[u,u^{-1};\sigma])$, because the right adjoint $r$ preserves filtered
colimits: the adjunction identifies the mapping space from $iX$ to any
object $Y$ of $D(A[u,u^{-1};\sigma])$ with the mapping space from $X$ to
$r(Y)$ in the category $\mathcal{C}$.
Thus $iX$ is perfect. Since
$iX\otimes_{A[u,u^{-1};\sigma]}^{\mathbf{L}}\mathbb{Q}[u,u^{-1}]=0$,
implication \eqref{eq:vanishing} gives
\[
    G(X)=0.
\]
However,
\begin{equation}\label{eq:nonzero}
\begin{gathered}
    \mathfrak{m}A[u,u^{-1};\sigma]\text{ belongs to the category $\mathcal{C}$},\\
    G\bigl(\mathfrak{m}A[u,u^{-1};\sigma]\bigr)
    \simeq\operatorname{Frac}(K[u,u^{-1};\sigma])\neq 0.
\end{gathered}
\end{equation}
To check the second assertion, $t$ acts as zero on
$\mathbb{Q}[u,u^{-1}]$ and is invertible in
\[
 \operatorname{Frac}(K[u,u^{-1};\sigma]).
\]
Hence
\[
    \mathbb{Q}[u,u^{-1}]\otimes_{A[u,u^{-1};\sigma]}
    \operatorname{Frac}(K[u,u^{-1};\sigma])=0.
\]
Tensoring \eqref{eq:flat} with the flat left $A[u,u^{-1};\sigma]$-module
$\operatorname{Frac}(K[u,u^{-1};\sigma])$ gives
\[
\begin{gathered}
    \mathfrak{m}A[u,u^{-1};\sigma]\otimes_{A[u,u^{-1};\sigma]}
    \operatorname{Frac}(K[u,u^{-1};\sigma])\\
    \xrightarrow{\sim}
    \operatorname{Frac}(K[u,u^{-1};\sigma]),
\end{gathered}
\]
which also computes the derived tensor product.

The kernel of $G$ is a localizing subcategory containing every compact
object of the category $\mathcal{C}$, but it does not contain
$\mathfrak{m}A[u,u^{-1};\sigma]$. Therefore those compact objects cannot
generate the category $\mathcal{C}$.
\end{proof}

\begin{proof}[Proof of Theorem~\ref{thm:main}]
Smoothness is
Theorem~\ref{thm:smoothness}, and failure of compact generation is
Theorem~\ref{thm:not-compactly-generated}.
\end{proof}

\end{document}